\numberwithin{equation}{section}
\newtheorem{theorem}{Theorem}[section]
\newtheorem{lemma}[theorem]{Lemma}
\theoremstyle{definition}
\newtheorem{remark}[theorem]{Remark}
\newtheorem{problem}[theorem]{Problem}
\newcommand\Supp{\operatorname{Supp}}
\newcommand{\Ass}[0]{\operatorname{Ass}}
\newcommand\Tor{\operatorname{Tor}}
\newcommand\Hom{\operatorname{Hom}}
\newcommand\Ext{\operatorname{Ext}}
\newcommand\Spec{\operatorname{Spec}}
\title{A note on the Matlis dual of a certain  injective hull}
\author{Peter Schenzel}
\address{Martin-Luther-Universit\"at Halle-Wittenberg,
Institut f\"ur Informatik, D --- 06 099 Halle (Saale),
Germany}
\email{peter.schenzel@informatik.uni-halle.de}
\subjclass[2000]{Primary:  13D45; Secondary:  13D45, 13C11}
\keywords{Matlis duality, injective hull, completion, one dimensional domain}
\begin{document}

\begin{abstract} Let $(R,\mathfrak{m})$ denote a  local ring with $E = E_R(R/\mathfrak{m})$ the injective hull of 
the residue field. Let $\mathfrak{p} \in \Spec R$
denote a prime ideal with $\dim R/\mathfrak{p} = 1$, and let $E_R(R/\mathfrak{p})$ be the injective hull of $R/\mathfrak{p}$.  
As the main result we prove that the Matlis dual $\Hom_R(E_R(R/\mathfrak{p}), E)$ is isomorphic to $\widehat{R_{\mathfrak{p}}}$, 
the completion of $R_{\mathfrak{p}}$, if and only if $R/\mathfrak{p}$ is complete. In the case of  $R$ a one dimensional domain 
there is a complete description of  $Q \otimes_R \hat{R}$ in terms of the completion $\hat{R}$.
\end{abstract}

\maketitle
\begin{center}
\textsl{Dedicated to Hans-Bj\o rn Foxby}
\end{center}
\section{Introduction} 
Let $R$ denote a commutative Noetherian ring. 
For injective $R$-modules $I, J$ it is well-known that 
$\Hom_R(I,J)$ is a flat $R$-module. In order to understand them the first case of interest is when 
$I, J$ are indecomposable (as follows by Matlis' Structure Theory (see e.g. \cite{eM} or \cite{EJ})). 

Let $(R,\mathfrak{m})$ denote a local ring with the injective hull $E = E_R(R/\mathfrak{m})$ 
of the residue field  $k = R/\mathfrak{m}$. In this situation it comes down to understand 
the Matlis dual $\Hom_R(I,E)$ of an injective $R$-module, in particular for $I = E_R(R/\mathfrak{p})$, 
the injective hull of $R/\mathfrak{p}$ for $\mathfrak{p} \in \Spec R$. It was shown (see \cite[3.3.14]{EJ} and 
\cite[3.4.1 (7)]{EJ}) that 
\[
\Hom_R(E_R(R/\mathfrak{p}), E) \simeq \Hom_R(E_R(R/\mathfrak{p}), E_R(R/\mathfrak{p})^{\mu_{\mathfrak{p}}})
\simeq \widehat{R_{\mathfrak{p}}^{\mu_{\mathfrak{p}}}}.
\]
Moreover it follows (see \cite[3.3.10]{EJ}) that 
\[
\mu_{\mathfrak{p}} = \dim_{k(\mathfrak{p})} \Hom_R(k(\mathfrak{p}), E).
\]
Therefore $\Hom_R(E_R(R/\mathfrak{p}), E)$ is the completion of a free $R_{\mathfrak{p}}$-module of rank $\mu_{\mathfrak{p}}$.

Here we shall prove -- as the main result of the paper -- the following result on the Matlis dual 
of a certain $E_R(R/\mathfrak{p})$. 

\begin{theorem} \label{1.1} Let $(R,\mathfrak{m})$ denote a  local ring. Let $\mathfrak{p}$ denote a 
one dimensional prime ideal. Then $\Hom_R(E_R(R/\mathfrak{p}), E) 
\simeq \widehat{R_{\mathfrak{p}}}$ (i.e. it is the completion of a free $R_{\mathfrak{p}}$-module 
of rank one) if and only if $R/\mathfrak{p}$ is complete. 
\end{theorem}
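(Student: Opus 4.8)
The plan is to reduce to the residue class domain $S := R/\mathfrak{p}$ and to reformulate the assertion as a statement about $\dim_K \Hom_S(K,E_S)$, where $K = k(\mathfrak{p})$ is the fraction field of $S$ and $E_S := \Hom_R(R/\mathfrak{p},E) = (0 :_E \mathfrak{p})$. Since $\mathfrak{p}$ is prime with $\dim S = 1$, the ring $S$ is a one dimensional Cohen--Macaulay local domain with maximal ideal $\mathfrak{n} = \mathfrak{m}/\mathfrak{p}$, and a standard computation identifies $E_S = E_S(S/\mathfrak{n})$. By the formula recalled in the introduction, $\Hom_R(E_R(R/\mathfrak{p}),E) \simeq \widehat{R_{\mathfrak{p}}^{\mu_{\mathfrak{p}}}}$ with $\mu_{\mathfrak{p}} = \dim_K \Hom_R(K,E)$, and $\widehat{R_{\mathfrak{p}}^{\mu_{\mathfrak{p}}}} \simeq \widehat{R_{\mathfrak{p}}}$ if and only if $\mu_{\mathfrak{p}} = 1$ (the rank being recovered after applying $- \otimes_{\widehat{R_{\mathfrak{p}}}} K$). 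Using the adjunction $\Hom_R(K,E) = \Hom_S(K,\Hom_R(S,E)) = \Hom_S(K,E_S)$, the theorem becomes equivalent to the numerical statement that $\dim_K \Hom_S(K,E_S) = 1$ exactly when $S$ is complete, and the bulk of the work is to evaluate this dimension.

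Second, I would compute $\Hom_S(K,E_S)$. Because $\dim S = 1$, inverting any nonzero $x \in \mathfrak{n}$ gives $S_x = K$, so $K/S = H^1_{\mathfrak{n}}(S)$ is Artinian and there is an exact sequence $0 \to S \to K \to H^1_{\mathfrak{n}}(S) \to 0$. Applying $\Hom_S(-,E_S)$ and using injectivity of $E_S$ yields $0 \to \Hom_S(H^1_{\mathfrak{n}}(S),E_S) \to \Hom_S(K,E_S) \to E_S \to 0$. Now I would apply $K \otimes_S -$: the module $E_S$ is $\mathfrak{n}$-torsion, so $K \otimes_S E_S = 0$, while $\Hom_S(K,E_S)$ is a $K$-vector space and hence unchanged by $K \otimes_S -$; thus $\Hom_S(K,E_S) \cong K \otimes_S N$ with $N := \Hom_S(H^1_{\mathfrak{n}}(S),E_S)$. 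As the Matlis dual of an Artinian module, $N$ is a finitely generated $\widehat{S}$-module, namely the canonical module $\omega_{\widehat{S}}$ by local duality over the complete ring $\widehat{S}$. Localizing at the finitely many one dimensional minimal primes of the Cohen--Macaulay ring $\widehat{S}$ and using $\operatorname{length}(\omega_A) = \operatorname{length}(A)$ for Artinian local $A$, one sees that $K \otimes_S \omega_{\widehat{S}}$ and $K \otimes_S \widehat{S}$ have the same $K$-dimension. Hence $\mu_{\mathfrak{p}} = \dim_K (K \otimes_S \widehat{S}) = \dim_K Q(\widehat{S})$, the $K$-dimension of the total ring of fractions of $\widehat{S}$.

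It then remains to show $\dim_K(K \otimes_S \widehat{S}) = 1$ if and only if $S$ is complete. If $S = \widehat{S}$, then $K \otimes_S \widehat{S} = K$ and the dimension is $1$. Conversely, suppose the dimension is $1$. Since $S$ is a domain the map $S \hookrightarrow \widehat{S}$ is injective, and $x$ is a nonzerodivisor on the Cohen--Macaulay ring $\widehat{S}$, so $\widehat{S} \hookrightarrow \widehat{S}_x = K \otimes_S \widehat{S}$; as $K$ also embeds into this $1$-dimensional $K$-space via $1 \otimes 1$, we get $K \otimes_S \widehat{S} = K$, producing a chain $S \subseteq \widehat{S} \subseteq K$ inside the fraction field. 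I regard this last step --- deducing completeness from $\widehat{S} \subseteq K$ --- as the heart of the argument, and I would settle it by combining the $\mathfrak{n}$-adic density of $S$ in $\widehat{S}$ with Krull's intersection theorem. Concretely, given $\alpha = a/b \in \widehat{S}$ with $a,b \in S$, density provides $s_n \in S$ with $\alpha - s_n \in \mathfrak{n}^n \widehat{S}$; multiplying by $b$ and using $S \cap \mathfrak{n}^n \widehat{S} = \mathfrak{n}^n$ gives $a - b s_n \in \mathfrak{n}^n$, so $a \in \bigcap_n (bS + \mathfrak{n}^n) = bS$, whence $\alpha \in S$. Therefore $\widehat{S} = S$, i.e. $R/\mathfrak{p}$ is complete, completing the equivalence.
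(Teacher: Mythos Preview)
Your argument is correct, and it shares the same skeleton as the paper's: both reduce to the one–dimensional domain $S=R/\mathfrak{p}$ via the adjunction $\Hom_R(k(\mathfrak{p}),E)\simeq\Hom_S(K,E_S)$ (this is the paper's Lemma~\ref{3.1}), and both exploit the short exact sequence $0\to S\to K\to H^1_{\mathfrak{n}}(S)\to 0$. The execution, however, is genuinely different. The paper tensors this sequence with $\hat{S}$, uses that $K=E_S(S)$ is injective to split $K\otimes_S\hat{S}\simeq K\oplus \hat{S}/S$, and reads off immediately that $\mu_{\mathfrak{p}}=1$ iff $\hat{S}/S=0$; along the way it also identifies $\hat{S}/S\simeq\Ext^1_S(K,S)$. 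You instead apply $\Hom_S(-,E_S)$ first, tensor with $K$, and pass through the canonical module $\omega_{\hat{S}}$ together with the length identity $\operatorname{length}_A(\omega_A)=\operatorname{length}_A(A)$ at the Artinian localizations to obtain $\mu_{\mathfrak{p}}=\dim_K(K\otimes_S\hat{S})=\dim_K Q(\hat{S})$; for the converse you deduce $\hat{S}\subseteq K$ and finish with an explicit Krull intersection argument. The paper's route is shorter and avoids the canonical module entirely (the splitting does all the work), while your route is more computational but yields the pleasant explicit formula $\mu_{\mathfrak{p}}=\dim_K Q(\hat{S})$ directly and gives a self-contained, elementary reason why $\hat{S}\subseteq K$ forces $\hat{S}=S$.
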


Let $\mathfrak{p}$ denote a one dimensional prime ideal in a local ring $(R,\mathfrak{m})$. 
The equality $\mu_{\mathfrak{p}} = 1$ was proved in \cite{HS} resp. in \cite{wM} in the case of $R$ 
a complete Gorenstein domain resp. in the case of $R$ a complete Cohen-Macaulay domain. The proofs 
are based on the use of the dualizing module of a complete Cohen-Macaulay domain. Note that the 
dualizing module is isomorphic to $R$ in the case of a complete Gorenstein domain. 

Here we use as a basic ingredient Matlis Duality and - as a main step - the reduction to 
the case of $\dim R = 1$ suggested by one of the reviewer's. In the case of a one dimensional domain 
there is a complete description of $\Hom_R(E_R(R),E)$ and $Q \otimes_R \hat{R}$ in terms of the completion $\hat{R}$ (see 
Theorem \ref{3.3} for the precise formulation). 

\section{Proofs} 
In the following $(R,\mathfrak{m})$ denotes always a local ring 
with $E =E_R(R/\mathfrak{m})$ the injective hull of the residue field $R/\mathfrak{m}$. 
Then $D_R(\cdot) = \Hom_R(\cdot,E)$ denotes the Matlis duality functor. 

\begin{remark} \label{2.1} 
(A) Let $X$ be an arbitrary $R$-module.  There is a natural homomorphism 
\[
X \to D(D(X))
\] 
that is always injective. If $(R,\mathfrak{m})$ is complete  it is an isomorphism whenever 
$X$ is an Artinian $R$-module resp. a finitely generated $R$-module (see \cite[p. 528]{eM} 
and \cite[Corollary 4.3]{eM}). Moreover it follows that the map is an 
isomorphism if and only if there is a finitely generated $R$-submodule $Y \subset X$ such that $X/Y$ is an 
Artinian $R$-module. For the proof we refer to \cite{tZ} and also to \cite{BEG} for a generalization.\\
(B) Let $M$ denote a finitely generated $R$-module. Then there is a natural isomorphism 
$M \otimes_R \hat{R} \simeq D(D(M)) $. That is, $M$ is Matlis reflexive if an only if it 
is complete. \\
(C) Let $X$ denote an $R$-module with $\Supp_R X \subset \{\mathfrak{m}\}$. Then $M$ admits the 
structure of an $\hat{R}$-module compatible with its $R$-module structure such 
that $X \otimes_R \hat{R} \to X$ is an isomorphism (see e.g. \cite[(2.1)]{tZ}). Let $M$ denote an $R$-module 
and $N$ an $\hat{R}$ module. Then $\Ext_R^i(M,N), i \in \mathbb{Z},$ has the structure of 
an $\hat{R}$-module. Moreover, here are natural isomorphisms 
\[
\Ext_R^i(M,N) \simeq \Ext_{\hat{R}}^i(M \otimes_R \hat{R},N)
\] 
for all $i \in \mathbb{Z}$ since $\hat{R}$ is a flat $R$-module. 
\end{remark}

As a technical tool we shall need the short exact sequence of the following trivial Lemma.

\begin{lemma} \label{2.2} Let $(R,\mathfrak{m})$ denote a one dimensional domain. Then there 
is a short exact sequence 
\[
0 \to R \to Q \to H^1_{\mathfrak{m}}(R) \to 0
\]
where $Q = \mathbb{Q}(R)$ denotes the quotient field of $R$.
\end{lemma}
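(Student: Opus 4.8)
The plan is to compute $H^1_{\mathfrak{m}}(R)$ directly from the \v{C}ech complex on a single parameter, using crucially that $\dim R = 1$ forces the maximal ideal to be the radical of a principal ideal. The whole sequence will then drop out of this computation once $R_x$ is identified with $Q$.

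First I would choose any nonzero element $x \in \mathfrak{m}$. Since $R$ is a one dimensional local domain, its only primes are $(0)$ and $\mathfrak{m}$; as $x \neq 0$, the only prime containing $x$ is $\mathfrak{m}$, so $\sqrt{(x)} = \mathfrak{m}$ (equivalently $\dim R/(x) = 0$, as the primes of $R/(x)$ correspond to primes of $R$ containing $x$). Because local cohomology depends only on the radical of the defining ideal, $H^i_{\mathfrak{m}}(R) \simeq H^i_{(x)}(R)$, and the latter is computed from the two-term \v{C}ech complex
\[
0 \to R \to R_x \to 0
\]
sitting in cohomological degrees $0$ and $1$, whose only map is the canonical localization homomorphism $\iota\colon R \to R_x$. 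Thus $H^0_{\mathfrak{m}}(R) = \Ker \iota$ and $H^1_{\mathfrak{m}}(R) = \Coker \iota = R_x/\iota(R)$. Since $R$ is a domain and $x \neq 0$ is a nonzerodivisor, $\iota$ is injective, so $H^0_{\mathfrak{m}}(R) = 0$ and I immediately obtain the short exact sequence $0 \to R \to R_x \to H^1_{\mathfrak{m}}(R) \to 0$.

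It remains to identify $R_x$ with the quotient field $Q$. Here I would observe that $R \subseteq R_x \subseteq Q$ and that $R_x$ is in fact a field: the primes of $R_x$ correspond to the primes of $R$ avoiding $x$, and the only such prime is $(0)$, so $R_x$ is a zero dimensional local domain, hence a field. A field lying between $R$ and $Q$ must equal $Q$, whence $R_x = Q$. Substituting this identification into the sequence above yields the asserted $0 \to R \to Q \to H^1_{\mathfrak{m}}(R) \to 0$. I do not expect a genuine obstacle — the statement is rightly labelled trivial — and the only point requiring care is precisely the identification $R_x = Q$, which uses that $R$ is a one dimensional local domain so that inverting one nonzero nonunit already inverts everything outside $(0)$; in higher dimension, or with additional primes present, one would recover only a partial localization rather than the full fraction field.
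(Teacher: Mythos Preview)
Your proof is correct. It differs from the paper's, however. The paper starts from the tautological sequence $0 \to R \to Q \to Q/R \to 0$, feeds it into the long exact sequence of local cohomology, and uses the vanishing $H^i_{\mathfrak{m}}(Q) = 0$ for all $i$ (since $Q$ is supported only at $(0)$) together with $H^0_{\mathfrak{m}}(Q/R) = Q/R$ to extract the isomorphism $Q/R \simeq H^1_{\mathfrak{m}}(R)$. You instead compute $H^1_{\mathfrak{m}}(R)$ directly from the one-generator \v{C}ech complex and then identify $R_x$ with $Q$. Your route is slightly more hands-on and has the advantage of exhibiting the map $R \to Q$ in the statement as the concrete localization map appearing in the \v{C}ech complex; the paper's route is a touch more conceptual and avoids the auxiliary identification $R_x = Q$ by working with $Q$ from the outset. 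Both arguments are short and rely on the same underlying fact that in a one dimensional local domain the only nonzero prime is $\mathfrak{m}$.
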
 

\begin{proof} We start with the  following short exact sequence $0 \to R \to Q \to Q/R \to 0$. 
The long exact local cohomology sequence provides an isomorphism $Q/R \simeq H^1_{\mathfrak{m}}(R)$. 
To this end recall that $H^i_{\mathfrak{m}}(Q) = 0$ for all $i \in \mathbb{Z}$. This proves the statement. 
\end{proof}

As one of the main ingredients of the proof we start with a reduction to the one 
dimensional case suggested by one of the reviewers.

\begin{lemma} \label{3.1} Let $\mathfrak{p}$ be a prime ideal in a local ring 
$(R,\mathfrak{m})$. Let 
\[
\Hom_R(E_R(R/\mathfrak{p}), E) \simeq \widehat{R_{\mathfrak{p}}^{\mu_{\mathfrak{p}}}}
\] 
with $
\mu_{\mathfrak{p}} = \dim_{k(\mathfrak{p})} \Hom_R(k(\mathfrak{p}), E) = 
\dim_{k(\mathfrak{p})}  \Hom_{R/\mathfrak{p}}(k(\mathfrak{p}), E_{R/\mathfrak{p}}(k))$.
\end{lemma}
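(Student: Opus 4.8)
The plan is to deduce the three assertions from standard Matlis duality together with a single change of rings. The isomorphism $\Hom_R(E_R(R/\mathfrak{p}), E) \simeq \widehat{R_{\mathfrak{p}}^{\mu_{\mathfrak{p}}}}$ and the first equality $\mu_{\mathfrak{p}} = \dim_{k(\mathfrak{p})} \Hom_R(k(\mathfrak{p}), E)$ are precisely what was recalled in the Introduction from \cite[3.3.14]{EJ}, \cite[3.4.1 (7)]{EJ} and \cite[3.3.10]{EJ}, so I would simply invoke these. The genuinely new content is the second equality, which rewrites the Bass-type number $\mu_{\mathfrak{p}}$ entirely in terms of the quotient domain $R/\mathfrak{p}$, and this is the step I would carry out in detail.

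Write $S = R/\mathfrak{p}$, a local ring with maximal ideal $\mathfrak{m}/\mathfrak{p}$ and residue field $k$. First I would record the standard identification of the injective hull over the quotient ring, namely $E_S(k) \simeq \Hom_R(S, E) = (0 :_E \mathfrak{p})$. Indeed, since $E$ is injective over $R$ and coinduction is right adjoint to the exact restriction functor, $\Hom_R(S, E)$ is an injective $S$-module; its $S$-socle is $\Hom_S(k, \Hom_R(S, E)) \simeq \Hom_R(k, E) \simeq k$, which is simple, so $\Hom_R(S,E)$ is the injective hull of $k$ over $S$.

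Next I would exploit that $k(\mathfrak{p})$ is naturally an $S$-module, being the quotient field of the domain $S$. The adjunction between restriction of scalars along $R \to S$ and coinduction $\Hom_R(S, -)$ then yields a natural isomorphism
\[
\Hom_R(k(\mathfrak{p}), E) \simeq \Hom_S\bigl(k(\mathfrak{p}), \Hom_R(S, E)\bigr) \simeq \Hom_{R/\mathfrak{p}}\bigl(k(\mathfrak{p}), E_{R/\mathfrak{p}}(k)\bigr),
\]
the last step using the identification of the previous paragraph. Since both Hom-modules acquire their $k(\mathfrak{p})$-vector space structure from the scalar action on the common source $k(\mathfrak{p})$, and the adjunction is natural in that variable, this isomorphism is $k(\mathfrak{p})$-linear. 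Passing to $k(\mathfrak{p})$-dimensions then gives $\dim_{k(\mathfrak{p})} \Hom_R(k(\mathfrak{p}), E) = \dim_{k(\mathfrak{p})} \Hom_{R/\mathfrak{p}}(k(\mathfrak{p}), E_{R/\mathfrak{p}}(k))$, which is the assertion to be proved.

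The argument is essentially formal once the two ingredients — the quotient description of $E_S(k)$ and the coinduction adjunction — are in place. The one point I would treat with care, and regard as the only real obstacle, is the verification that the displayed adjunction isomorphism respects the $k(\mathfrak{p})$-vector space structures, so that comparing dimensions is legitimate; this reduces to the naturality of the adjunction against multiplication by elements of $k(\mathfrak{p})$ on the source, which is routine but worth stating explicitly.
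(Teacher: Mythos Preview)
Your argument is correct and coincides with the paper's own proof: both invoke the adjunction $\Hom_R(k(\mathfrak{p}),E)\simeq \Hom_{R/\mathfrak{p}}(k(\mathfrak{p}),\Hom_R(R/\mathfrak{p},E))$ together with the identification $\Hom_R(R/\mathfrak{p},E)\simeq E_{R/\mathfrak{p}}(k)$, while the first isomorphism and the first description of $\mu_{\mathfrak{p}}$ are taken from the references cited in the Introduction. Your write-up simply makes explicit two points the paper leaves implicit, namely the justification of $\Hom_R(R/\mathfrak{p},E)\simeq E_{R/\mathfrak{p}}(k)$ via injectivity and the socle computation, and the $k(\mathfrak{p})$-linearity of the adjunction isomorphism.
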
 

\begin{proof} 
Since $k(\mathfrak{p})$ is an $R/\mathfrak{p}$-module the adjunction formula gives the 
following isomorphisms
\[
\Hom_R(k(\mathfrak{p}), E) \simeq \Hom_{R/\mathfrak{p}}(k(\mathfrak{p}), \Hom_R(R/\mathfrak{p},  E)) \simeq 
\Hom_{R/\mathfrak{p}}(k(\mathfrak{p}), E_{R/\mathfrak{p}}(k)).
\]
For the last isomorphism note that $\Hom_R(R/\mathfrak{p}, E) \simeq E_{R/\mathfrak{p}}(k)$.
\end{proof}

Now we are prepared for the main result in the one dimensional case. 

\begin{theorem} \label{3.2} Let $R$ denote a one dimensional local domain and 
$Q = \mathbb{Q}(R)$ its quotient field. There are isomorphisms 
\[
D_{\hat{R}}(D_R(Q)) \simeq Q \otimes_R \hat{R} \simeq Q \oplus \hat{R}/R \text{\; and \;} \hat{R}/R \simeq \Ext_R^1(Q,R).
\]  
Thus $\hat{R}/R$ has a natural structure as a $Q$-vector space, and so it is injective as an $R$-module.
Moreover, $R$ is complete if and only if $\Ext^1_R(Q,R) = 0$, if and only if $D_R(Q) \simeq Q$. 
\end{theorem}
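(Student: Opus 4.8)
The plan is to run everything off the two short exact sequences
\[
0\to R\to Q\to Q/R\to 0\qquad\text{and}\qquad 0\to R\to \hat{R}\to \hat{R}/R\to 0,
\]
the first being Lemma~\ref{2.2} (so $Q/R\cong H^1_{\mathfrak{m}}(R)$). First I would tensor the first sequence with the flat module $\hat{R}$ and use Remark~\ref{2.1}(C) (which gives $H^1_{\mathfrak{m}}(R)\otimes_R\hat{R}\cong H^1_{\mathfrak{m}}(R)$, this module being supported at $\mathfrak{m}$) to obtain $0\to\hat{R}\to Q\otimes_R\hat{R}\to H^1_{\mathfrak{m}}(R)\to0$; in particular $Q\to Q\otimes_R\hat{R}$ is injective. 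Comparing the two displayed sequences through the vertical maps $R\to Q$ and $\hat{R}\to Q\otimes_R\hat{R}$, the snake lemma (the induced map on cokernels $Q/R\to H^1_{\mathfrak{m}}(R)$ being the completion isomorphism of Remark~\ref{2.1}(C)) identifies the cokernel of $Q\to Q\otimes_R\hat{R}$ with $\hat{R}/R$, giving an exact sequence $0\to Q\to Q\otimes_R\hat{R}\to\hat{R}/R\to0$. Since $Q\otimes_R\hat{R}=S^{-1}\hat{R}$ with $S=R\setminus\{0\}$ is a $Q$-algebra, this is a sequence of $Q$-vector spaces; hence it splits, yielding $Q\otimes_R\hat{R}\cong Q\oplus\hat{R}/R$ and exhibiting $\hat{R}/R$ as a $Q$-vector space. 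As $Q=E_R(R/(0))$ is an injective $R$-module and arbitrary direct sums of injectives over the Noetherian ring $R$ are injective, $\hat{R}/R$ is injective.

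For the first isomorphism, Remark~\ref{2.1}(C) with $i=0$ gives $D_R(Q)=\Hom_R(Q,E)\cong\Hom_{\hat{R}}(Q\otimes_R\hat{R},E)=D_{\hat{R}}(Q\otimes_R\hat{R})$. The sequence $0\to\hat{R}\to Q\otimes_R\hat{R}\to H^1_{\mathfrak{m}}(R)\to0$ displays $Q\otimes_R\hat{R}$ as an extension of the Artinian $\hat{R}$-module $H^1_{\mathfrak{m}}(R)$ by the finitely generated submodule $\hat{R}$, so by Remark~\ref{2.1}(A) it is Matlis reflexive over the complete ring $\hat{R}$. Therefore $D_{\hat{R}}(D_R(Q))\cong D_{\hat{R}}(D_{\hat{R}}(Q\otimes_R\hat{R}))\cong Q\otimes_R\hat{R}$.

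For $\hat{R}/R\cong\Ext^1_R(Q,R)$ I would apply $\Hom_R(-,R)$ to $0\to R\to Q\to Q/R\to0$. Here $\Hom_R(Q,R)=0$ and $\Hom_R(Q/R,R)=0$ (a torsion module mapping into the domain $R$) and $\Ext^1_R(R,R)=0$, so the long exact sequence collapses to $\Ext^1_R(Q,R)\cong\Coker\bigl(R\xrightarrow{\delta}\Ext^1_R(Q/R,R)\bigr)$. Writing $Q/R\cong\varinjlim_n R/t^nR$ for a fixed $0\neq t\in\mathfrak{m}$ (so that $R_t=Q$) and using $\Ext^1_R(R/t^nR,R)\cong R/t^nR$ together with the vanishing of $\Hom_R(R/t^nR,R)$ (which kills the relevant $\varprojlim^1$), I get $\Ext^1_R(Q/R,R)\cong\varprojlim_n R/t^nR\cong\hat{R}$, the transition maps being the natural surjections; tracing the connecting map shows $\delta$ is the canonical map $R\to\hat{R}$, whence $\Ext^1_R(Q,R)\cong\hat{R}/R$.

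Finally, for the equivalences: $R$ is complete iff $\hat{R}=R$ iff $\hat{R}/R=0$, which by the previous paragraph is iff $\Ext^1_R(Q,R)=0$. For the clause $D_R(Q)\cong Q$ I would dualize $0\to\hat{R}\to Q\otimes_R\hat{R}\to H^1_{\mathfrak{m}}(R)\to0$ over the complete Cohen--Macaulay ring $\hat{R}$ (of dimension one): using $D_{\hat{R}}(\hat{R})=E$, local duality $D_{\hat{R}}(H^1_{\hat{\mathfrak{m}}}(\hat{R}))\cong\omega_{\hat{R}}$, and $D_{\hat{R}}(Q\otimes_R\hat{R})=D_R(Q)$, this yields $0\to\omega_{\hat{R}}\to D_R(Q)\to E\to0$. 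Since $\Hom_R(Q,E)$ carries a $Q$-vector space structure through its source and $E\otimes_RQ=0$, applying $-\otimes_RQ$ gives $D_R(Q)\cong\omega_{\hat{R}}\otimes_RQ$ as $Q$-vector spaces. The main obstacle is the resulting dimension count: one must check that $\omega_{\hat{R}}$ and $\hat{R}$ have equal length at each minimal prime of $\hat{R}$ (the canonical module of an Artinian local ring has the same length as the ring), so that $\dim_Q(\omega_{\hat{R}}\otimes_RQ)=\dim_Q(Q\otimes_R\hat{R})=1+\dim_Q(\hat{R}/R)$; hence $D_R(Q)\cong Q$ iff $\hat{R}/R=0$ iff $R$ is complete. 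I expect this last count to be the delicate point, the more so since one would prefer to avoid the canonical module and argue purely through Matlis duality.
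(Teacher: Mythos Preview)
Your argument is correct throughout, and the first half---the snake-lemma comparison yielding $0\to Q\to Q\otimes_R\hat{R}\to\hat{R}/R\to0$, the splitting, and the Matlis reflexivity of $Q\otimes_R\hat{R}$ via Remark~\ref{2.1}(A)---is essentially the paper's own route.

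The proofs diverge in the remaining two steps, and in both places the paper's argument is shorter. For $\hat{R}/R\cong\Ext^1_R(Q,R)$ the paper does not pass through $\Ext^1_R(Q/R,R)$ and a Milnor $\varprojlim^1$-sequence; instead it observes that $\Ext^i_R(Q,\hat{R})=0$ for all $i$ (because $\hat{R}\cong\Hom_R(E,E)$, so by adjunction $\Ext^i_R(Q,\hat{R})\cong\Hom_R(\Tor_i^R(Q,E),E)$, and $Q\otimes_RE=0$), and then applies $\Hom_R(Q,-)$ to $0\to R\to\hat{R}\to\hat{R}/R\to0$ to obtain $\Ext^1_R(Q,R)\cong\Hom_R(Q,\hat{R}/R)\cong\hat{R}/R$, the last isomorphism holding because $\hat{R}/R$ is already a $Q$-vector space. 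This bypasses the limit bookkeeping and the identification of your connecting map $\delta$.

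For the equivalence $D_R(Q)\cong Q\Leftrightarrow R$ complete, the paper avoids the canonical module entirely. Having already shown $D_{\hat{R}}(D_R(Q))\cong Q\oplus\hat{R}/R$, one simply notes that $D_R(Q)\cong Q$ forces $D_R(D_R(Q))\cong Q$; comparing the double-dual of the sequence $0\to R\to Q\to H^1_{\mathfrak{m}}(R)\to0$ (where $D_R(D_R(R))\cong\hat{R}$ and $H^1_{\mathfrak{m}}(R)$ is reflexive) with the original then gives $R\cong\hat{R}$ by the five lemma. Your detour through $0\to\omega_{\hat{R}}\to D_R(Q)\to E\to0$ and the length equality $\ell(\omega_{A})=\ell(A)$ for Artinian local $A$ is valid and has the side benefit of computing $\dim_Q D_R(Q)$ explicitly (in effect anticipating Theorem~\ref{3.3}), but as you yourself suspected, the purely Matlis-duality argument is both available and cleaner.
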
 

\begin{proof}  Consider the short exact sequence 
of Lemma \ref{2.1} and apply $\cdot \otimes_R \hat{R}$. It induces a commutative 
diagram with exact rows 
\[
\begin{array}{ccccccccl}
0  & \to & R & \longrightarrow &  Q  & \longrightarrow  & H^1_{\mathfrak{m}}(R)  & \to &  0 \\
    &       &    \downarrow&       & \downarrow  &  & \parallel               &       &        \\
0  & \to & \hat{R} & \to & Q \otimes \hat{R} & \to &H^1_{\mathfrak{m}}(R) &  \to &  0. 
\end{array}
\]
The vertical homomorphism at the right is an isomorphism 
since $H^1_{\mathfrak{m}}(R)$ is an Artinian $R$-module (see Remark \ref{2.1}). Because the vertical homomorphisms are injective 
the snake lemma implies an isomorphism $\hat{R}/R \simeq (Q \otimes_R \hat{R})/Q$. Whence there is the short 
exact sequence 
\[
0 \to Q \to Q \otimes_R \hat{R} \to \hat{R}/R \to 0.
\]
By virtue of Remark \ref{2.1} there is an isomorphism $D_R(Q) \simeq D_{\hat{R}}(Q \otimes_R \hat{R})$. So 
Matlis Duality implies that $D_{\hat{R}}(D_R(Q)) \simeq Q \otimes_R \hat{R}$. 
Since $Q = E_R(R)$ is an injective $R$-module it follows that 
\[
D_{\hat{R}}(D_R(Q)) \simeq Q \otimes_R \hat{R} \simeq Q \oplus \hat{R}/R.
\]
This proves the first isomorphisms. Moreover $\hat{R}/R \simeq \Hom_R(Q,\hat{R}/R)$ since $\hat{R}/R$ admits the structure of a 
$Q$-vector space.

Next  we claim that $\Ext^i_R(Q,\hat{R}) = 0$ for all $i \in \mathbb{Z}$. 
By Matlis Duality and adjointness there are the following isomorphisms 
\[
\Ext^i_R(Q,\hat{R}) \simeq \Ext^i_R(Q,\Hom_R(E,E)) \simeq \Hom_R(\Tor_i^R(Q,E),E).
\] 
So it will be enough to show that $\Tor_i^R(Q,E) = 0$ for all $i \in \mathbb{Z}$. This follows since $Q$ is 
a flat $R$-module and $Q\otimes_R E = 0$. 

With this in mind the long exact cohomology sequence 
of $\Ext^i_R(Q,\cdot)$ applied to the short exact sequence $0 \to R \to \hat{R} \to \hat{R}/R \to 0$ 
induces the isomorphism 
\[ 
\Hom_R(Q,\hat{R}/R) \simeq \Ext_R^1(Q,R).
\] 
This provides the second isomorphism 
of the statement and finishes the proof of the first equivalence. For the second  equivalence note that $\dim_Q \Hom_R(Q,E) = 1 $ 
implies $D_R(D_R(Q)) \simeq Q$ and therefore $\hat{R} =R$ by view of the short exact sequence of Lemma \ref{2.1} and $D_R(D_R(R)) \simeq \hat{R}$. 
\end{proof}

In the following we consider the general case of a one dimensional domain. To this end let 
$\Ass \hat{R} = \{\mathfrak{q}_1,\ldots,\mathfrak{q}_r\}$ denote the set of 
associated prime ideals of the completion $\hat{R}$  of the domain $R$. Then $\mathfrak{q}_i \cap R = (0)$ and 
$\dim \hat{R}/\mathfrak{q}_i = 1$ for $i = 1,\ldots,r$.

\begin{theorem} \label{3.3} Let $(R,\mathfrak{m})$ denote a one dimensional domain. Then there is 
are isomorphisms 
\[
\Hom_R(Q,E) \simeq \oplus_{i=1}^r E_{\hat{R}}(\hat{R}/\mathfrak{q}_i) \text{ and } Q\otimes_R \hat{R} 
\simeq \oplus_{i=1}^r\widehat{\hat{R}_{\mathfrak{q}_i}} ,
\]
where $\widehat{\hat{R}_{\mathfrak{q}_i}}$ denotes the completion of $\hat{R}_{\mathfrak{q}_i}, i = 1,\ldots,r$.
\end{theorem}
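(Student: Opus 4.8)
The plan is to pass to the completion $\hat R$ and exploit the known structure of injective hulls there, since $\hat R$ is a complete one-dimensional Noetherian ring whose minimal primes $\mathfrak q_1,\ldots,\mathfrak q_r$ all contract to $(0)$ in $R$. First I would compute $\Hom_R(Q,E)$. Using the flatness of $\hat R$ over $R$ and the fact that $E = E_R(R/\mathfrak m) = E_{\hat R}(\hat R/\hat{\mathfrak m})$ carries a natural $\hat R$-module structure, I would invoke Remark~\ref{2.1}(C) to rewrite
\[
\Hom_R(Q,E) \simeq \Hom_{\hat R}(Q \otimes_R \hat R, E).
\]
This reduces the problem to understanding $Q \otimes_R \hat R$ as an $\hat R$-module, which is exactly the object appearing in the second claimed isomorphism, so the two statements are really one.

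Next I would identify $Q \otimes_R \hat R$. Since $R$ is a domain with quotient field $Q$, tensoring with $Q$ is localization at $(0)$; thus $Q \otimes_R \hat R = S^{-1}\hat R$ where $S = R \setminus \{0\}$. Because each $\mathfrak q_i$ contracts to $(0)$ and these are precisely the minimal (hence associated) primes of $\hat R$, the multiplicative set $S$ avoids every $\mathfrak q_i$ and meets every prime strictly containing some $\mathfrak q_i$; consequently $S^{-1}\hat R$ is the semilocal ring whose maximal ideals correspond to $\mathfrak q_1,\ldots,\mathfrak q_r$, and in fact $S^{-1}\hat R \simeq \prod_{i=1}^r \hat R_{\mathfrak q_i}$ after localizing. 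The subtle point is that $Q \otimes_R \hat R$ is not merely the total ring of fractions of $\hat R$ but its $\mathfrak m$-adic completion-theoretic enlargement; the diagonal argument from Theorem~\ref{3.2} (the snake-lemma diagram comparing $0 \to R \to Q \to H^1_{\mathfrak m}(R) \to 0$ with its $\hat R$-base change) shows that $Q \otimes_R \hat R$ sits in a sequence with Artinian cokernel, forcing each factor to be the \emph{completed} local ring $\widehat{\hat R_{\mathfrak q_i}}$ rather than the bare localization.

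To make this precise I would argue one associated prime at a time. Applying $\Hom_{\hat R}(-,E)$ to $Q \otimes_R \hat R$ and using that Matlis duality over the complete ring $\hat R$ sends completed localizations to injective hulls, I expect $D_{\hat R}(\widehat{\hat R_{\mathfrak q_i}}) \simeq E_{\hat R}(\hat R/\mathfrak q_i)$, matching the $\mu_{\mathfrak q_i} = 1$ phenomenon already established for the one-dimensional primes of a complete domain in Theorem~\ref{3.2}. Dually, since $\hat R$ is complete I may recover $Q \otimes_R \hat R \simeq D_{\hat R}(D_{\hat R}(Q\otimes_R\hat R)) \simeq D_{\hat R}(\bigoplus_i E_{\hat R}(\hat R/\mathfrak q_i))$, and the Matlis dual of $E_{\hat R}(\hat R/\mathfrak q_i)$ over the complete ring is the completed localization $\widehat{\hat R_{\mathfrak q_i}}$ by the computation recalled in the Introduction. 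Assembling these gives both displayed isomorphisms simultaneously.

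The main obstacle I anticipate is the bookkeeping that separates the honest localization $\hat R_{\mathfrak q_i}$ from its completion $\widehat{\hat R_{\mathfrak q_i}}$: one must verify that base-changing $Q$ along $R \to \hat R$ produces the completed rather than the uncompleted localization, and that no cross terms couple the distinct $\mathfrak q_i$. I would resolve this by noting that the $\mathfrak q_i$ are the \emph{minimal} primes of $\hat R$ (so the idempotent decomposition of $S^{-1}\hat R$ is clean) and by transporting the single-prime result of Theorem~\ref{3.2}, applied to each one-dimensional quotient domain $\hat R/\mathfrak q_i$ together with the identity $E_{\hat R}(\hat R/\mathfrak q_i) \simeq E_{\hat R/\mathfrak q_i}(\cdot)$ from Lemma~\ref{3.1}, to pin down each summand. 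The flatness of $\hat R$ over $R$ throughout guarantees that all the $\mathrm{Hom}$ and $\mathrm{Ext}$ comparisons commute with the base change.
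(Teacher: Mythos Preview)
Your route is genuinely different from the paper's, and once one misstep is repaired it is in fact more elementary. The paper does \emph{not} compute $Q\otimes_R\hat R$ directly; instead it first proves the $\Hom$-isomorphism by applying $D_R(\cdot)$ to the sequence $0\to R\to Q\to H^1_{\mathfrak m}(R)\to 0$, recognising $D_R(H^1_{\mathfrak m}(R))\simeq\omega_{\hat R}$, and then comparing the resulting sequence with the minimal injective resolution $0\to\omega_{\hat R}\to\bigoplus_i E_{\hat R}(\hat R/\mathfrak q_i)\to E\to 0$ of the dualizing module of $\hat R$. The snake lemma gives $\Hom_R(Q,E)\simeq\bigoplus_i E_{\hat R}(\hat R/\mathfrak q_i)$; only then is Matlis reflexivity of $Q\otimes_R\hat R$ (coming from the mini-max sequence $0\to\hat R\to Q\otimes_R\hat R\to H^1_{\mathfrak m}(R)\to 0$) combined with Lemma~\ref{3.1} and Theorem~\ref{3.2} to obtain the $\otimes$-isomorphism. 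You never invoke $\omega_{\hat R}$, which is the paper's key structural input.

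Your localisation argument for the second isomorphism is correct and shorter, but the paragraph about the ``subtle point'' is mistaken, and the hedging that follows is unnecessary. Since $\dim\hat R=1$ and each $\mathfrak q_i$ is minimal, $\hat R_{\mathfrak q_i}$ is a zero-dimensional Noetherian local ring, hence Artinian, hence \emph{already complete}: $\hat R_{\mathfrak q_i}=\widehat{\hat R_{\mathfrak q_i}}$. Moreover $S^{-1}\hat R$ is not merely semilocal but Artinian, so the Chinese remainder decomposition $S^{-1}\hat R\simeq\prod_i\hat R_{\mathfrak q_i}$ is immediate, and $S^{-1}\hat R$ \emph{is} the total ring of fractions of $\hat R$ (every non-zerodivisor of $\hat R$ already becomes a unit in $S^{-1}\hat R$). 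There is no ``completion-theoretic enlargement'' to worry about; the appeal to an Artinian cokernel ``forcing'' completion is a red herring. With this in hand, your dualisation step---computing $D_{\hat R}(\hat R_{\mathfrak q_i})$ as an injective $\hat R$-module supported only at $\mathfrak q_i$ with Bass number $\mu_{\mathfrak q_i}$, and then invoking $\mu_{\mathfrak q_i}=1$ from Lemma~\ref{3.1} and Theorem~\ref{3.2} applied to the complete domain $\hat R/\mathfrak q_i$---goes through and yields the first isomorphism. So your strategy works; just delete the imagined obstacle.
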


\begin{proof} It is known that $\Hom_R(H^1_{\mathfrak{m}}(R), E ) \simeq \Hom_{\hat{R}}(H^1_{\mathfrak{m}\hat{R}}(\hat{R}), E)$ 
is the dualizing module $\omega_{\hat{R}}$ of $\hat{R}$. Its minimal injective resolution as $\hat{R}$-module has the following form 
\[
0 \to \omega_{\hat{R}} \to \oplus_{i=1}^r E_{\hat{R}}(\hat{R}/\mathfrak{q}_i) \to E \to 0
\]
(for these results on the dualizing module see e.g. \cite[Section 3.3]{BH}). By applying the Matlis dual functor $D_R(\cdot)$ to the short exact sequence 
of Lemma \ref{2.2} it provides a short exact sequence of $\hat{R}$-modules 
\[
0 \to \omega_{\hat{R}} \to \Hom_{\hat{R}}(Q \otimes_R \hat{R}, E) \to E \to 0
\]
of $\hat{R}$-modules. Whence there is a commutative diagram with exact rows 
\[
\begin{array}{ccccccccl}
0  & \to & \omega_{\hat{R}} & \to & \Hom_{\hat{R}}(Q \otimes_R \hat{R},E) & \to &E &  \to &  0 \\
    &       &    \parallel &       & \downarrow  &  & \parallel               &       &        \\
0  & \to & \omega_{\hat{R}} & \longrightarrow &  \oplus_{i=1}^r E_{\hat{R}}(\hat{R}/\mathfrak{q}_i)  & \longrightarrow  & E & \to &  0. 
\end{array}
\]
By the snake lemma it yields an isomorphism $\oplus_{i=1}^r E_{\hat{R}}(\hat{R}/\mathfrak{q}_i) \simeq \Hom_{\hat{R}}( Q \otimes_R \hat{R}, E) $ 
and therefore $\Hom_R(Q,E) \simeq \oplus_{i=1}^r E_{\hat{R}}(\hat{R}/\mathfrak{q}_i)$. 
By Matlis Duality (see Remark \ref{2.1}) there are the following isomorphisms 
\[
Q \otimes_R \hat{R} \simeq D_{\hat{R}}(D_{\hat{R}}(Q \otimes_R \hat{R})) \simeq 
\oplus_{i = 1}^r \Hom_{\hat{R}}(E_{\hat{R}}(\hat{R}/\mathfrak{q}_i), E).
\]
By Lemma \ref{3.1} and Theorem \ref{3.2} it provides that $\Hom_{\hat{R}}(E_{\hat{R}}(\hat{R}/\mathfrak{q}_i), E) 
\simeq  \widehat{\hat{R}_{\mathfrak{q}_i}}, i = 1,\ldots,r$. This proves the second statement. 
\end{proof}

\noindent {\it Proof of Theorem \ref{1.1}.} By view of Lemma \ref{3.1} we may reduce the 
computation of $\mu_{\mathfrak{p}}$ to the case of the one dimensional domain $R/\mathfrak{p}$. Then 
the statements are a consequence of Theorem \ref{3.2}. 
\hfill $\Box$

\section{Remarks}
We conclude with a few discussions on the previous results. 

\begin{remark} \label{4.2} Theorem \ref{1.1} does not hold for a prime ideal $\mathfrak{p} \subset R$ in a 
complete local ring $R$ with $\dim R/\mathfrak{p}> 1$. To this end let $(R,\mathfrak{m})$ a complete 
local two dimensional domain. Then $E_R(R) = Q$, the quotient field of $R$.  But now $\Hom_R(Q,E) \simeq Q$ 
can not be true. Assume that it holds. Then the 
natural map $Q \to D(D(Q))$  is an isomorphism too. This can not be the case as follows by view of Remark \ref{2.1}. 

Let $k$ denote a field and $x$ an indeterminate over $k$. Consider the situation of $k[x]_{(x)}$ and 
its completion $k[|x|]$. Then their 
quotient fields are $k(x)$ and $k((x))$ resp. Then 
\[
k(x) \otimes_{k[x]_{(x)}} k[|x|] \simeq k(x) \oplus k[|x|]/k[x]_{(x)}, \text{\; and \;}  \\
\Hom_{k[x]_{(x)}}(k(x),k(x)/k[x]_{(x)}) \simeq k((x)) 
\]
as follows by Theorems \ref{3.2} and \ref{3.3}. 
\end{remark}

\begin{problem} \label{5.3} Let $\mathfrak{p}$ denote a one dimensional prime ideal in a local ring 
$(R, \mathfrak{m})$. Suppose that $R/\mathfrak{p}$ is complete. We know that $\Hom_R(\widehat{R_{\mathfrak{p}}},E)$ 
is again an injective $R$-module. Since the natural homomorphism $E_R(R/\mathfrak{p}) \to I = D(D(E_R(R/\mathfrak{p})))$ is injective 
it turns out that $E_R(R/\mathfrak{p})$ is a direct summand of $I$. By view of Remark \ref{2.1} it can not be 
an isomorphism. 

By the Matlis Structure Theorem 
it follows that $I \simeq \oplus_{\mathfrak{q} \in \Spec R} E_R(R/\mathfrak{q})^{\mu{(\mathfrak{q},I)}}$ where 
\[
\mu(\mathfrak{q},I) = \dim_{k(\mathfrak{q})} \Hom_{R_{\mathfrak{q}}}(k(\mathfrak{q}), I_{\mathfrak{q}})
\]
denotes the multiplicities of the occurrence of $E_R(R/\mathfrak{q})$ in $I$ (see e.g. \cite[Section 3.3]{EJ}). We know that 
$\mu(\mathfrak{q},I) = 0$ for all $\mathfrak{q} \not\subset \mathfrak{p}$ and $\mu(\mathfrak{p},I) \geq 1$. 
It is not clear to us whether $\mu(\mathfrak{p},I)$ is finite or even $1$? Which of the $\mu(\mathfrak{q},I)$ 
are not zero? 
\end{problem} 

\begin{remark} \label{4.3} Let $(R,\mathfrak{m})$ denote a one dimensional domain. One might ask whether 
the $Q$-rank of $Q \otimes_R \hat{R}$ is finite only if $\hat{R} = R$. This is not true as it follows by 
Nagata's Example (E3.3) (see \cite[page 207]{mN}). 
\end{remark}

\section*{Acknowledgements} The author expresses his deep thanks to the reviewers for their help in order to 
simplify, correct and improve his manuscript. One of them suggested the reduction argument 
(see Lemma \ref{3.1}). This allowed an extension and an essential simplification of the author's original arguments. Moreover, the author thanks Winfried Bruns, Ed Enochs and Bill Heinzer for discussions 
around the subjects of the paper.

\end{document}